\documentclass[11pt]{article}

\usepackage{fullpage}
\usepackage{amsmath, amsthm, amsfonts, amssymb, amstext, mathrsfs, enumerate}
\usepackage{graphicx, ragged2e, lscape, framed, xcolor}
\usepackage{subfiles}

\theoremstyle{plain}
\newtheorem{theorem}{Theorem}[section]

\newtheorem{conjecture}[theorem]{Conjecture}
\newtheorem{corollary}[theorem]{Corollary}

\newtheorem*{remark}{Remark}

\numberwithin{equation}{section}
\allowdisplaybreaks

\newcommand{\affl}[3]{\noindent #1, Email: {\tt #2}\\ \textsc{#3}\\[1.5pt]}

\usepackage[pagebackref]{hyperref}
\hypersetup{
	colorlinks=true,
    urlcolor=purple,
	linkcolor=purple,
    citecolor=purple,
}

\DeclareMathOperator{\supp}{supp}

\title{\textbf{Localized Tur\'{a}n-type inequalities for $Q$-index}}
\author{M. Rajesh Kannan, Hitesh Kumar, Shivaramakrishna Pragada}
\date{}

\begin{document}
\maketitle
\begin{abstract}
For a connected graph \(G\), let $q(G)$ denote the $Q$-index of $G$, i.e., the largest eigenvalue of its signless Laplacian matrix. Abreu and Nikiforov (2013) showed that
\[
    q(G) \leq 2n\left(1-\frac{1}{\omega(G)}\right), 
\]
where $\omega(G)$ denotes the clique number of $G$. We first give a short algebraic proof of this result. For a vertex $v\in V(G)$, let \(c(v)\) denote the order of the largest clique of \(G\) containing \(v\). Our main result is the following vertex localized bound that refines the result of Abreu and Nikiforov:
\[
    q(G) \leq
    2\sum_{v\in V(G)}\left(1-\frac{1}{c(v)}\right).
\]
Equality holds precisely for complete bipartite graphs when
\(\omega(G)=2\), and for regular complete \(\omega(G)\)-partite graphs
when \(\omega(G)\geq 3\). As a consequence, we also obtain an analogous localized inequality for the $A_\alpha$-matrix of $G$. Finally, we generalize the above localized inequality to vertex-weighted signed graphs. This contributes to the localization program for spectral Tur\'{a}n-type results. 
\end{abstract}

\noindent
\textbf{Keywords:} Signless Laplacian, $Q$-index, Localization, Tur\'{a}n-type Theorems, Motzkin-Strauss Theorem, Signed graphs

\noindent
\textbf{MSC2020:} 05C50, 05C35, 15A42

\section{Introduction}

We use standard graph theory notation throughout the paper. Let $G = (V(G), E(G))$ be a finite simple graph of order $n$ and size $m$. Denote the \emph{clique number} of $G$ by $\omega(G)$(or simply $\omega$ when the underlying graph is clear from the context). Let $\deg(v)$ denote the degree of vertex $v$ in $G$. We assume that $G$ is connected throughout. For an edge $e\in E$ (or a vertex $v\in V$), we will denote by $c(e)$ (resp. $c(v)$) the size of the largest clique in $G$ containing the edge $e$ (resp. the vertex $v$). The \textit{adjacency matrix} of $G$ is an $n\times n$ matrix $A(G) = [a_{uv}]$, where $a_{uv} = 1$ if $u$ and $v$ are adjacent, and $0$ otherwise. The \emph{degree matrix} $D(G)$ is a diagonal matrix with degrees of vertices of $G$ on the diagonal. The matrices 
\[L(G) := D(G) - A(G)\quad \text{and} \quad Q(G) := D(G) + A(G)\]
are called the \emph{Laplacian} and the \emph{signless Laplacian matrix} of $G$, respectively. A real symmetric matrix $M$ of order $n$ has real eigenvalues, which we enumerate as   
\[\lambda_1(M) \geq  \cdots \geq \lambda_n(M).\] 
We let $\lambda_1(G):= \lambda_1(A(G))$. Define $q(G): = \lambda_1(Q(G))$ and call it the $Q$-index of $G$. 

There is a vast literature on spectral Tur\'{a}n type inequalities for $\lambda_1(G)$ \cite{Li_Liu_Zhang_2025_stone_simonovits,Nikiforov_2002, Nikiforov_2009, Wilf_1986} and $q(G)$ \cite{Abreu_Nikiforov_2013, He_Jin_Zhang_2013, Zheng_Li_Su_2026, Zheng_Li_Fan_2026, Zheng_Li_Li_2026_color_critical}. Recently, there has been an interest in vertex and edge localization of classical inequalities including but not limited to the classic Tur\'{a}n's Theorem, see \cite{Malec_Tompkins_2023} and its follow-up papers. The spectral Tur\'{a}n theorems for $\lambda_1(G)$ were localized by Lele Liu and Bo Ning \cite{Liu_Ning_2025_weighted, Liu_Ning_2026}. Precisely, they proved the following.

\begin{theorem}[Edge-localized Spectral Tur\'{a}n's Theorem \cite{Liu_Ning_2025_weighted, Liu_Ning_2026}] \label{thm:spectral_edge_localization_Turan}
For any connected graph $G$, 
\begin{equation*}
   \lambda_1^2(G) \le \sum_{e\in E(G)} 2\left(1-\frac{1}{c(e)}\right). 
\end{equation*}
Equality holds if and only if $G$ is a complete bipartite graph when $\omega(G)=2$ or a regular complete $\omega$-partite graph when $\omega(G)\ge 3$. 
\end{theorem}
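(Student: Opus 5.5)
We follow the Motzkin--Straus route that Liu and Ning use, with the Perron vector of $A(G)$ playing the role of the weighting. Let $x$ be the unit positive Perron eigenvector of $A(G)$, so that $\lambda_1 = 2\sum_{uv\in E} x_u x_v$. The first move is to square the eigen-equation rather than $\lambda_1$ itself. From $\lambda_1 x_u=\sum_{v\sim u}x_v$ we get
\[
\lambda_1^2=\sum_u(\lambda_1 x_u)^2=\sum_u\Big(\sum_{v\sim u}x_v\Big)^2 .
\]
The target is to bound this by $\sum_{e}2(1-1/c(e))$, i.e.\ to assign to each edge a contribution of at most $2(1-1/c(e))$.

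The cleaner alternative is the localized Motzkin--Straus inequality (Malec--Tompkins style): for any nonnegative vector $y$,
\[
2\sum_{uv\in E} y_u y_v\le \sum_{uv\in E}\Big(1-\frac1{c(uv)}\Big)\cdot\frac{2y_uy_v}{\ \cdot\ }\cdots
\]
In its usable form this reads $\big(\sum_{uv\in E}2y_uy_v\big)^2\le \big(\sum_{e}2(1-1/c(e))\big)\big(\sum_{uv\in E}2y_u^2y_v^2\big)$.
\begin{itemize}
\item Set $y_u=x_u$.
\item Use $\sum_{uv\in E}2x_u^2x_v^2\le(\sum_u x_u^2)^2=1$.
\item Conclude $\lambda_1^2=(2\sum_{E}x_ux_v)^2\le\sum_e 2(1-1/c(e))$.
\end{itemize}

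The key step is therefore an edge-weighted Motzkin--Straus lemma: for $z\ge 0$ on vertices,
\[
\sum_{uv\in E} z_uz_v \le \frac12\Big(\sum_e(1-1/c(e))\Big)^{1/2}\Big(\sum_{uv\in E}2z_u^2z_v^2\Big)^{1/2}.
\]
We would prove this by Cauchy--Schwarz,
\[
\sum_{uv} z_uz_v=\sum_{uv}\sqrt{1-1/c(uv)}\cdot\frac{z_uz_v}{\sqrt{1-1/c(uv)}},
\]
which reduces the lemma to showing $\sum_{uv} z_u^2z_v^2/(1-1/c(uv))\le(\sum z_u^2)^2/2$. This is a localized Motzkin--Straus statement for the weights $w_u=z_u^2$ on the simplex: $\sum_{uv\in E} w_uw_v/(1-1/c(uv))\le \tfrac12(\sum w)^2$. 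We would prove it by the Motzkin--Straus shifting argument. Among maximizers, if two nonadjacent vertices both carry weight, moving all of the weight of one onto the other does not decrease the objective, since the objective is linear in the pair. The support can thus be pushed onto a clique $K$ of size $k$. There every edge has $c(e)\ge k$, so the objective is at most $\frac{k}{k-1}\sum_{uv\in K}w_uw_v\le \frac{k}{k-1}\cdot\frac{k-1}{2k}=\frac12$.

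The main obstacle is that the shifting step is not obviously monotone. The coefficients $1/(1-1/c(e))$ vary from edge to edge, and after shifting, the edges inside the final clique may have larger $c(e)$ than that clique's size. Fortunately this only helps, because $c(e)\ge k$ always holds.

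For equality, Cauchy--Schwarz forces $x_ux_v\propto 1-1/c(uv)$, and the simplex bound must be tight on cliques of size $\omega$. Combined with connectivity and $\sum_{E}2x_u^2x_v^2=1$, this forces the graph to be complete multipartite with $c(e)\equiv\omega$. Tightness then gives regularity when $\omega\ge3$, and arbitrary parts when $\omega=2$, where $\lambda_1^2=m$ holds for every $K_{a,b}$.
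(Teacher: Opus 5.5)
Your core strategy is the right one, and it is the Liu--Ning argument: apply Cauchy--Schwarz to $\lambda_1=2\sum_{uv\in E}x_ux_v$ with weights $\sqrt{1-1/c(uv)}$, then bound the other factor by a localized Motzkin--Straus inequality for $w_v=x_v^2$. The paper quotes this theorem rather than reproving it. It does run exactly these two steps, via Theorem~\ref{thm:weighted_M_S_1}, in the proof of Theorem~\ref{thm:signless_vertex_local}, and your shifting proof of the weighted inequality is fine.

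The chain you actually write, however, is wrong. The ``usable form'' $\bigl(\sum_E 2y_uy_v\bigr)^2\le\bigl(\sum_e 2(1-1/c(e))\bigr)\bigl(\sum_E 2y_u^2y_v^2\bigr)$ is false. For $G=K_2$ and $x=(1/\sqrt2,1/\sqrt2)$ the left side is $1$ and the right side is $1/2$. More generally, $\sum_E 2x_u^2x_v^2\le 1-\sum_u x_u^4<1$ for every positive unit vector. So your bullets would give strict inequality for every graph with an edge, contradicting the equality cases. Your ``key lemma'' has an even smaller right-hand side, so it is false as well. Your Cauchy--Schwarz step does not prove it; what that step gives is the weaker, true bound $(\sum_E z_uz_v)^2\le\frac12\bigl(\sum_e(1-1/c(e))\bigr)(\sum_u z_u^2)^2$. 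The correct second factor is $2\sum_E\frac{c(uv)}{c(uv)-1}x_u^2x_v^2$, not $2\sum_E x_u^2x_v^2$. It is at most $1$ by your shifting argument applied to $(x_v^2)\in S$, and this alone yields $\lambda_1^2\le\sum_e 2(1-1/c(e))$. So discard the usable form, the bullets and the lemma, and keep only this direct route.

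The equality part has a genuine gap. You invoke $\sum_E 2x_u^2x_v^2=1$, which never holds. Your shifting argument cannot supply the structure either: it only moves a maximizer to another maximizer supported on a clique, and says nothing about the support of the original one. Cauchy--Schwarz tightness alone does not help. For $C_5$ the Perron vector is constant and $c(e)\equiv 2$, so $x_ux_v\propto 1-1/c(uv)$ holds, yet $\lambda_1^2=4<5$.

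What you must use is that $w=(x_v^2)$, which has full support, satisfies $2\sum_E\beta_{uv}w_uw_v=1$ with $\beta_{uv}=c(uv)/(c(uv)-1)$. Combine this with the equality clause of Theorem~\ref{thm:weighted_M_S_1}, or with a direct second-order argument. For $F(w)=\sum_E\beta_{uv}w_uw_v$, all partial derivatives agree at a full-support maximizer on $S$. So if $u\sim z$ while $v$ is adjacent to neither, take $d=\chi_u+\chi_z-2\chi_v$ (with $\chi_a$ the indicator vector of $a$). Then $F(w+td)=F(w)+t^2\beta_{uz}>F(w)$ for small $t\neq 0$, a contradiction. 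Hence non-adjacency is transitive, so $G$ is complete $\omega$-partite and $c(e)\equiv\omega$.

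Only then does your Cauchy--Schwarz condition bite: $x_ux_v$ is constant on edges. For $\omega\ge 3$ each triangle forces equal entries, so $x$ is constant and $G$ is regular. For $\omega=2$ every $K_{a,b}$ has $\lambda_1^2=ab=m$, which is the right-hand side.
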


\begin{theorem}[Vertex-localized Spectral Tur\'{a}n's Theorem \cite{Liu_Ning_2025_weighted}] \label{thm:spectral_vertex_localization_Turan}
For any connected graph $G$,
\begin{equation*}
   \lambda_1(G) \le \sum_{v\in V(G)} \left(1-\frac{1}{c(v)}\right). 
\end{equation*}
Equality holds if and only if $G$ is a regular complete multi-partite graph.
\end{theorem}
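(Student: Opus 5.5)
We need to prove the vertex-localized bound $\lambda_1(G)\le\sum_v(1-1/c(v))$, with equality exactly for regular complete multipartite graphs. The plan is to deduce it from Theorem~\ref{thm:spectral_edge_localization_Turan} by Cauchy--Schwarz.

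Let $x$ be the positive unit Perron vector of $A(G)$, so that $\lambda_1=\sum_{uv\in E}2x_ux_v$. For each edge $e=uv$ we have $c(e)\le \min(c(u),c(v))$. Hence
\[
\frac{1}{c(e)}\ge \frac{1}{2}\left(\frac{1}{c(u)}+\frac{1}{c(v)}\right),
\qquad\text{so}\qquad
1-\frac{1}{c(e)}\le \frac{1}{2}\left(\Bigl(1-\frac{1}{c(u)}\Bigr)+\Bigl(1-\frac{1}{c(v)}\Bigr)\right).
\]
Summing over edges and writing $w_v=1-1/c(v)$, Theorem~\ref{thm:spectral_edge_localization_Turan} gives
\[
\lambda_1^2\le \sum_{uv\in E}(w_u+w_v)=\sum_v \deg(v)\,w_v.
\]
This does not directly give $\lambda_1\le\sum_v w_v$, since degrees appear. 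So a direct weighted approach is better.

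Use the eigenvalue equation: $\lambda_1 x_v=\sum_{u\sim v}x_u$. We want to show $\lambda_1\le \sum_v w_v$. Consider the quadratic form $\lambda_1=x^TAx$ with $\sum x_v^2=1$. Apply a localized Motzkin--Straus bound to $y_v=x_v^2$-type weights. The cleaner route is the one used for the edge version:
\[
\lambda_1^2=\|Ax\|^2=\sum_v\Bigl(\sum_{u\sim v}x_u\Bigr)^2 .
\]
The intended key lemma is a localized Motzkin--Straus inequality. For a nonnegative vector $z$ with $\sum_v z_v=1$,
\[
\sum_{uv\in E}2z_uz_v\le \sum_v z_v^{\,?}\ldots
\]
More precisely, I would use the vertex-localized form
\[
\sum_{uv\in E} 2 z_u z_v \le \sum_v \Bigl(1-\frac1{c(v)}\Bigr) z_v^{2}\cdot(\text{normalization}),
\]
proved by the standard Motzkin--Straus shifting argument. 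Then I would take $z_v=x_v$ (not normalized by $\ell_1$). Multiplying out, $\lambda_1=x^TAx$, and Cauchy--Schwarz should convert $(\sum x_v)^2\le n\sum x_v^2$ into the vertex sum.

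Concretely, the plan is to prove the inequality
\[
2\sum_{uv\in E}z_uz_v\le\sum_v w_v\,z_v\Bigl(\sum_u z_u\Bigr)
\]
for all $z\ge 0$. Then plug in $z=x$, use $\sum_u x_u\cdot x_v$, and apply $(\sum_u x_u)^2\le$ (via Perron) to bound by $\sum_v w_v$.

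The main obstacle is this localized Motzkin--Straus step, which I would prove as follows.
\begin{itemize}
\item Minimize a counterexample support.
\item If two nonadjacent vertices lie in the support, shift weight between them; the difference is linear along the shift, so one can move to an endpoint and shrink the support.
\item When the support is a clique $K$, each $c(v)\ge|K|$, and the inequality reduces to $(\sum z)^2-\sum z^2\le(1-1/|K|)(\sum z)^2$. This is Cauchy--Schwarz.
\end{itemize}

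The equality case follows by tracking when Cauchy--Schwarz and the shifting are tight, forcing a regular complete multipartite structure.
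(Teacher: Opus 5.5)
\textbf{There is a genuine gap in your final step.} Your key lemma is correct: for $z\ge 0$,
\[
2\sum_{uv\in E}z_uz_v\le\Bigl(\sum_v w_vz_v\Bigr)\Bigl(\sum_u z_u\Bigr),
\]
and your shifting proof works. Along $z+t(e_u-e_v)$ with $u\not\sim v$ the $t^2$ term vanishes, and on a clique support $K$ every $w_v\ge 1-1/|K|$.

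The problem is what comes after. Plugging in the unit Perron vector $x$ gives only $\lambda_1\le(\sum_v w_vx_v)(\sum_v x_v)$. You then need $(\sum_v w_vx_v)(\sum_v x_v)\le\sum_v w_v$. Cauchy--Schwarz gives at best $\sqrt{n\sum_v w_v^2}$, which is $\ge\sum_v w_v$, so it points the wrong way. No property of the Perron vector can rescue it either, because the inequality is false. Here is a counterexample:
\begin{itemize}
\item Let $A$ be two disjoint copies of $K_6$.
\item Let $B$ induce a connected $4$-regular bipartite graph on $12$ vertices, e.g.\ $K_{6,6}$ minus a Hamiltonian cycle.
\item Add a perfect matching between $A$ and $B$.
\end{itemize}
Then $c\equiv 6$ on $A$ and $c\equiv 2$ on $B$, since no $B$-vertex lies in a triangle. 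Hence $\sum_v w_v=10+6=16$. The partition $\{A,B\}$ is equitable with quotient $\left(\begin{smallmatrix}5&1\\1&4\end{smallmatrix}\right)$, so $\lambda_1=(9+\sqrt5)/2$. The Perron vector is, up to scaling, $\phi=(1+\sqrt5)/2$ on $A$ and $1$ on $B$. Therefore
\[
\frac{\bigl(\sum_v w_vx_v\bigr)\bigl(\sum_v x_v\bigr)}{\sum_v x_v^2}=\frac{\phi^2(10\phi+6)}{\phi^2+1}=\frac{26\phi+16}{\phi+2}=16+\frac{10\phi-16}{\phi+2}>16 .
\]
So your intermediate bound is strictly weaker than the theorem, and this route cannot be completed. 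Your equality discussion depends on it, so it is unsupported as well.

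\textbf{How the statement actually follows here.} The paper does not prove this statement; it cites Liu and Ning. It does, however, follow at once from Theorem~\ref{thm:signless_vertex_local} and $2\lambda_1(G)\le q(G)$ in \eqref{eq:Q_index_lambda_1}, giving $\lambda_1\le q/2\le\sum_v(1-1/c(v))$.

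For the equality case, take the unit Perron vector $x$ of $A(G)$. Then
\[
q\ge x^TQx=\lambda_1+\sum_{uv\in E}(x_u^2+x_v^2)\ge 2\lambda_1,
\]
with equality only if $x$ is constant, i.e.\ $G$ is regular. Combined with the equality case of Theorem~\ref{thm:signless_vertex_local}, this yields exactly the regular complete multipartite graphs.

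The $Q$ route succeeds where a direct Motzkin--Straus bound on $A$ fails because of the $\sqrt{x_v}$ reweighting. It makes $\sum_{u\sim v}(1+x_u/x_v)=q$ for every $v$. As a result each $w_v$ enters with the same coefficient $q$, rather than being weighted by the Perron entries as in your bound, and that weighting is exactly what the example above exploits.
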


Kannan, Kumar, and Pragada \cite{Kannan_Kumar_Pragada_2025} investigated several new localized inequalities. Recently, Feng Liu, Shuang Sun, Yan Wang, and Qi Wu \cite{Liu_Sun_Wang_Wu_2026} established a localized inequality for $\lambda_1(G)$ involving walks that generalizes several past results. In this paper, we focus on localized inequalities for the $Q$-index $q(G)$. 

It is well-known that 
\begin{equation}\label{eq:Q_index_lambda_1}
    2\lambda_1(G)\le q(G)\le 2\Delta(G).
\end{equation}

Abreu and Nikiforov \cite{Abreu_Nikiforov_2013} established the following bound for $q(G)$. In view of \eqref{eq:Q_index_lambda_1}, this inequality generalizes Wilf's inequality \cite{Wilf_1986} and therefore the classical Tur\'{a}n's Theorem.

\begin{theorem}[\cite{Abreu_Nikiforov_2013}]\label{thm:signless_vertex}
For any connected graph $G$, we have
    \[q(G) \leq 2n \left(1 - \frac{1}{\omega(G)}\right).\]
Equality holds if and only if $G$ is a complete bipartite graph when $\omega=2$ or a regular complete $\omega$-partite graph when $\omega\ge 3$. 
\end{theorem}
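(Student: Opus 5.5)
We give a short algebraic proof built on the Motzkin--Straus theorem. Let $x$ be the unit positive Perron eigenvector of $Q(G)$; it exists and is positive because $G$ is connected. Then
\[
q(G)=x^{T}Q(G)x=\sum_{uv\in E}(x_u+x_v)^2 .
\]
The first step is to bound this sum by something quadratic in $x$ that Motzkin--Straus can control. Expanding,
\[
\sum_{uv\in E}(x_u+x_v)^2=\sum_{v}\deg(v)x_v^2+2\sum_{uv\in E}x_ux_v .
\]
The term $2\sum_{uv\in E} x_u x_v$ is the adjacency form. The diagonal term is harder: for each $v$ we have $\deg(v)x_v^2 = x_v\sum_{u\sim v} x_v$, and we want to compare it with something like $x_v\sum_{u\sim v}x_u$, which fails pointwise.

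Instead we use a cleaner route through the Cauchy--Schwarz inequality applied to the vector $y=x\circ x$ (coordinatewise square), which has $\sum_v y_v=1$. From $(x_u+x_v)^2\le 2(x_u^2+x_v^2)$ we would get $q\le 2\Delta$, which is too weak, so the bound must be global in $n$. Write
\[
q=\sum_{uv\in E}(x_u+x_v)^2 .
\]
Then
\[
q^2=\Big(\sum_v x_v\sum_{u\sim v}(x_u+x_v)\Big)^2\le \Big(\sum_v 1\Big)\sum_v x_v^2\Big(\sum_{u\sim v}(x_u+x_v)\Big)^2
\]
fails to close. Better is the eigen-equation: $(q-\deg(v))x_v=\sum_{u\sim v}x_u$. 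Summing over $v$ with weights $1$ gives
\[
\sum_v (q-\deg(v))x_v=\sum_v \deg(v)x_v ,
\]
so
\[
q\sum_v x_v = 2\sum_v \deg(v)x_v=2\sum_{uv\in E}(x_u+x_v).
\]
Now normalize $x$ instead so that $\sum_v x_v=1$, and let $z=x$. Then
\[
q=2\sum_{uv\in E}(z_u+z_v)=2\sum_{v}\deg(v)z_v .
\]
The next step is to bound $\sum_v \deg(v)z_v$. For this we would apply Motzkin--Straus to $y_v=z_v/\!\cdot$; but it is not quadratic in $z$, so we rewrite it using $z_v\le$ something.

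This suggests pairing the identity above with $x^TQx$. Take $x\ge 0$ with $\sum_v x_v=1$. From the eigen-equation multiplied by $x_v$ and summed,
\[
q\sum_v x_v^2 = \sum_{uv\in E}(x_u+x_v)^2 .
\]
The key inequality to aim for is
\[
\sum_{uv\in E}(x_u+x_v)^2\le 2n\left(1-\tfrac1\omega\right)\sum_v x_v^2 ,
\]
and we plan to deduce it from Motzkin--Straus applied to auxiliary weights. The expected route is as follows. Write $(x_u+x_v)^2=2x_u x_v + x_u^2+x_v^2$ and $\sum_{uv\in E}(x_u^2+x_v^2)=\sum_v \deg(v)x_v^2$. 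Then use Motzkin--Straus in the weighted form
\[
\sum_{uv\in E} a_u a_v\le \tfrac12\left(1-\tfrac1\omega\right)\Big(\sum_v a_v\Big)^2
\]
with $a_v=1$ for $v\in V$. This gives $m\le \frac{n^2}{2}(1-\frac1\omega)$, and with $a_v=x_v$ it gives $2\sum_{uv\in E} x_ux_v\le (1-\frac1\omega)(\sum_v x_v)^2\le (1-\frac1\omega)\,n\sum_v x_v^2$. For the degree term, we would show $\sum_v \deg(v)x_v^2\le n(1-\frac1\omega)\sum_v x_v^2$ after a suitable averaging, using the eigen-equation to trade $\deg(v)x_v$ against $\sum_{u\sim v}x_u$.

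The main obstacle is exactly this degree term, since $\deg(v)$ may exceed $n(1-1/\omega)$ for individual vertices. What we would try first is the identity $\deg(v) x_v^2 = x_v(qx_v-\sum_{u\sim v}x_u)$ together with applying Motzkin--Straus to the vector with entries $a_v=x_v\cdot(\text{row of }Q)$, or equivalently bounding $\|Qx\|_1$. Equality analysis would then follow from the known equality cases of Motzkin--Straus (the weights are uniform on a Turán-type structure) combined with positivity of $x$. That analysis forces $G$ to be complete $\omega$-partite, and additionally regular when $\omega\ge3$, since the cross-term and degree-term bounds must both be tight; complete bipartite graphs are the exception where equality holds irrespective of regularity.
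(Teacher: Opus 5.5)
\textbf{There is a genuine gap.} The step you single out as the main obstacle cannot be carried out, because the inequality $\sum_v \deg(v)x_v^2 \le n(1-\frac1\omega)\sum_v x_v^2$ is false for the Perron vector of $Q$.

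Take $G=K_{1,2}$, so $n=3$, $\omega=2$ and $q=3$. The Perron vector is $x\propto(1,2,1)$, with the center as the middle coordinate. Then $\sum_v\deg(v)x_v^2/\sum_v x_v^2=10/6=5/3>3/2$.

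More generally, for $K_{a,b}$ the degree term equals $\frac{a^2+b^2}{a+b}\|x\|^2$ and the cross term $2\sum_{uv\in E}x_ux_v$ equals $\frac{2ab}{a+b}\|x\|^2$. They sum to $n\|x\|^2$, so equality holds in the theorem. But when $a\ne b$, the first term strictly exceeds $\frac n2\|x\|^2$ and the second strictly falls below it.

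So no scheme that bounds the diagonal part and the adjacency part separately by $n(1-\frac1\omega)$ each can work. The two parts have to be traded off against each other, and your proposal gives no mechanism for that. Your first step, $2\sum_{uv\in E}x_ux_v\le(1-\frac1\omega)(\sum_v x_v)^2\le(1-\frac1\omega)n\|x\|^2$, applies Cauchy--Schwarz immediately and so discards exactly the quantity $(\sum_v x_v)^2$ that is needed for the trade-off.

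Your equality discussion is inconsistent for the same reason. You say both partial bounds must be tight, yet the non-regular complete bipartite graphs, which you correctly place in the equality class, violate the degree bound.

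The missing idea is to split $Q=L+2A$ rather than $D+A$. The Laplacian form $x^TLx=\sum_{uv\in E}(x_u-x_v)^2$ can only grow when edges are added. So for a unit vector $x$,
\[ x^TL(G)x\le x^TL(K_n)x=n-\Big(\sum_v x_v\Big)^2. \]
This negative $(\sum_v x_v)^2$ term is what compensates the Motzkin--Straus bound $2x^TAx\le 2(1-\frac1\omega)(\sum_v x_v)^2$. Adding the two gives
\[ q\le n+\Big(1-\tfrac2\omega\Big)\Big(\sum_v x_v\Big)^2. \]
Only now apply Cauchy--Schwarz, $(\sum_v x_v)^2\le n$, which is legitimate because $1-\frac2\omega\ge 0$ when $\omega\ge 2$. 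This yields $q\le 2n(1-\frac1\omega)$.

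The equality case then falls out cleanly:
\begin{enumerate}
\item Tightness in Motzkin--Straus with $x>0$ forces $G$ to be complete $\omega$-partite.
\item For $\omega\ge 3$ the coefficient $1-\frac2\omega$ is positive, so equality in Cauchy--Schwarz is also required. Hence $x$ is constant and $G$ is regular.
\item For $\omega=2$ the coefficient vanishes, which is exactly why every $K_{a,b}$ attains equality.
\end{enumerate}

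Your earlier identity $q\sum_v x_v=2\sum_v\deg(v)x_v$ is correct, but as you noticed it only leads to $q\le 2\Delta$.
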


Abreu and Nikiforov \cite{Abreu_Nikiforov_2013} established Theorem \ref{thm:signless_vertex} using an inductive argument. The equality case was characterized by Bian He, Ya-Lei Jin, and Xiao-Dong Zhang \cite{He_Jin_Zhang_2013}. We first give an alternative short algebraic proof of Theorem \ref{thm:signless_vertex} in Section \ref{sec:Nikiforov_short_proof}. 

Our main result is the following vertex localized inequality for $Q$-index.

\begin{theorem}\label{thm:signless_vertex_local}
For any connected graph $G$, we have 
    \[ q(G) \leq 2\sum_{v \in V}\left(1 - \frac{1}{c(v)}\right).\] 
Equality holds if and only if $G$ is a complete bipartite graph when $\omega=2$ or a regular complete $\omega$-partite graph when $\omega\ge 3$.     
\end{theorem}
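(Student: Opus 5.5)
The plan is to work with the Perron vector $\mathbf{x}>0$ of $Q(G)$, normalized by $\|\mathbf{x}\|_2=1$, and split $q(G)$ into a diagonal part and an off-diagonal part. We have
\[
q(G)=\mathbf{x}^{T}Q\mathbf{x}=\sum_{uv\in E}(x_u+x_v)^2=\sum_{v}\deg(v)x_v^2+2\sum_{uv\in E}x_ux_v .
\]
The off-diagonal part is at most $\lambda_1(G)$, and Theorem \ref{thm:spectral_vertex_localization_Turan} bounds $\lambda_1(G)$ by $\sum_v(1-1/c(v))$. Alone this does not suffice, since the diagonal part is not controlled in the same way (the star already shows this). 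So the two parts must be handled together. The tool I would use is the same one behind my short proof of Theorem \ref{thm:signless_vertex}: a localized Motzkin--Straus inequality of Liu--Ning type. For nonnegative $y$ it reads
\[
2\sum_{uv\in E}y_uy_v\le \Bigl(\sum_v y_v\Bigr)\sum_v\Bigl(1-\frac{1}{c(v)}\Bigr)y_v,
\]
and I would re-derive it by the usual support-shrinking argument. Here one moves weight between two nonadjacent vertices in the support, and the linear factor $\sum_v(1-1/c(v))y_v$ only helps, because each such move can be directed so as not to increase the weighted term.

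The first key step is to use the eigen-equation $(q-\deg(v))x_v=\sum_{u\sim v}x_u$ to rewrite $q$ in terms of first powers of the entries. Summing over $v$ gives
\[
q\sum_v x_v=\sum_v \deg(v)x_v+\sum_v \deg(v)x_v=2\sum_v\deg(v)x_v .
\]
Hence $q=2\sum_v \deg(v)x_v/\sum_v x_v$. It therefore suffices to prove
\[
\sum_v \deg(v)x_v\le \Bigl(\sum_v x_v\Bigr)\sum_v\Bigl(1-\frac1{c(v)}\Bigr)\cdot\frac{\sum_v x_v}{\sum_v x_v}.
\]
This is a weighted-degree Tur\'an statement. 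The second step is to bound the left side $\sum_v\deg(v)x_v=\sum_{uv\in E}(x_u+x_v)$ by multiplying the eigen-equation by $x_v$ and combining the linear and quadratic identities. Writing $S=\sum_v x_v$, I would aim for
\[
q\,S\le 2\sum_{uv\in E}(x_u+x_v)\le 2\sum_v\Bigl(1-\frac1{c(v)}\Bigr)\cdot \max(\ldots).
\]
To get there, I would apply the localized Motzkin--Straus inequality to the vector $y_v=x_v$ for the edge sum $\sum_{uv\in E} x_ux_v$, and use Cauchy--Schwarz together with the identity $\sum_{u\sim v}x_u=(q-\deg(v))x_v$ to trade $\sum_v\deg(v)x_v^2$ against $q$.

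The main obstacle is exactly this coupling: a term-by-term comparison fails, as the star shows, so the inequality must come from an exact identity followed by a single application of localized Motzkin--Straus. The first thing I would try is to take the line-graph viewpoint $Q=RR^T$, which gives $q=\lambda_1(R^TR)$. Here the Perron vector on edges is $z_{uv}=x_u+x_v$. Then $q=\sum_{e}z_e^2$ over $\|z\|=1$ in the image, and $\sum_{e\ni v}z_e=q\,x_v$. Applying the localized Motzkin--Straus inequality to the vertex vector $x$ then bounds $q\sum_v x_v^2\cdot(\ldots)$ by $2\sum_v(1-1/c(v))$ after one application of Cauchy--Schwarz.

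For equality, every inequality used must be tight. Tightness in Cauchy--Schwarz forces $\deg(v)x_v$ to be proportional to $x_v$ on each neighbourhood, and tightness in Motzkin--Straus forces the support structure to be complete $\omega$-partite with $c(v)=\omega$ for all $v$. When $\omega=2$ any complete bipartite graph passes, since $q=n$ there. When $\omega\ge3$, regularity is forced as in Theorem \ref{thm:signless_vertex}.
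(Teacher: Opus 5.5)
Your proposal has a genuine gap: the inequality itself is never derived. The parts you do establish are sound.

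\emph{What is correct.} The identity $q\sum_v x_v=2\sum_v\deg(v)x_v$ is exactly the paper's starting point. For the $\ell_1$-normalized Perron vector it reads $q/2=\sum_{uv\in E}(x_u+x_v)$, which the paper encodes as $\lambda_1(M)=y^TMy$ with $y_v=\sqrt{x_v}$. Your vertex-localized Motzkin--Straus inequality is also true. Support shrinking proves it because both sides are linear along $e_u-e_w$ for nonadjacent $u,w$. You must, however, move so as not to decrease the \emph{difference} of the two sides, not merely so as not to increase the weighted term.

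\emph{Where it breaks.} Your ``second step'' restates the goal and then leaves placeholders ($\max(\ldots)$, ``after one application of Cauchy--Schwarz''). The line-graph paragraph produces no inequality either. Worse, the natural combination of your ingredients does not close. Take $\sum_v x_v=1$ and apply Cauchy--Schwarz to $q/2=\sum_{uv\in E}\sqrt{x_ux_v}\cdot\frac{x_u+x_v}{\sqrt{x_ux_v}}$. Since $\sum_{uv\in E}\frac{(x_u+x_v)^2}{x_ux_v}=\sum_v\sum_{u\sim v}(1+x_u/x_v)=nq$, this gives $(q/2)^2\le\bigl(\sum_{uv\in E}x_ux_v\bigr)\,nq$. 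Your inequality then yields only $q\le 2n\sum_v(1-1/c(v))x_v$. That is weaker than the target whenever the Perron mass concentrates on vertices in large cliques. For $K_4$ with a pendant vertex it gives about $7.36$, while $2\sum_v(1-1/c(v))=7$.

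\emph{The missing idea.} Put the clique weights \emph{inside} the quadratic form. Use the edge-weighted inequality of Theorem \ref{thm:weighted_M_S_1}, $2\sum_{uv\in E}\frac{c(uv)}{c(uv)-1}x_ux_v\le 1$. Then split Cauchy--Schwarz so that the complementary weights fall on the other factor:
\[
\Bigl(\frac q2\Bigr)^2\le\Bigl(\sum_{uv\in E}\frac{c(uv)}{c(uv)-1}x_ux_v\Bigr)\Bigl(\sum_{uv\in E}\frac{c(uv)-1}{c(uv)}\Bigl(\frac{x_u}{x_v}+\frac{x_v}{x_u}+2\Bigr)\Bigr).
\]
Because $c(uv)\le\min\{c(u),c(v)\}$, the second factor is at most $\sum_v(1-1/c(v))\sum_{u\sim v}(1+x_u/x_v)=q\sum_v(1-1/c(v))$. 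Each vertex contributes exactly $q$ here, which is what permits localization at the vertex level. The first factor is at most $1/2$, so $(q/2)^2\le\frac12 q\sum_v(1-1/c(v))$.

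\emph{Equality.} Your equality discussion inherits the gap. In particular, for $\omega\ge 3$ regularity cannot come ``as in Theorem \ref{thm:signless_vertex}'', because there is no step $(\sum_v x_v)^2\le n\sum_v x_v^2$ here. Instead, equality in Theorem \ref{thm:weighted_M_S_1} forces a complete $\omega$-partite graph with $\sum_{v\in V_i}x_v=1/\omega$. Hence $x_v=1/(\omega|V_i|)$ on $V_i$, and the eigen-equation gives $q=n+(\omega-2)|V_i|$ for every $i$. So all parts have equal size.
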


We prove this by first translating $Q(G)$ to a weighted adjacency matrix and then using a weighted version of the Motzkin-Strauss Theorem. 

Recall that for a given graph $G$ and $\alpha\in [0,1]$, 
\[ A_\alpha(G):= \alpha D(G) + (1-\alpha)A(G).\]
We also note the following corollary for the $A_\alpha$ matrix. 

\begin{corollary}\label{cor:A_alpha_local}
    For any connected graph $G$ and $0\le \alpha\le 1/2$, we have 
    \[ \lambda_1(A_\alpha(G)) \le \sum_{v \in V}\left(1 - \frac{1}{c(v)}\right). \]
\end{corollary}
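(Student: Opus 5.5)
The corollary follows by interpolating $A_\alpha(G)$ between $A(G)$ and $Q(G)$ and then applying the two localized bounds already stated.

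For $0\le\alpha\le 1/2$ we write
\[
A_\alpha(G)=\alpha D(G)+(1-\alpha)A(G)=\alpha\,Q(G)+(1-2\alpha)\,A(G).
\]
Both coefficients $\alpha$ and $1-2\alpha$ are nonnegative, and both $Q(G)$ and $A(G)$ are real symmetric. The largest eigenvalue of a real symmetric matrix is its maximum Rayleigh quotient over unit vectors, so $\lambda_1$ is subadditive and positively homogeneous. This gives
\[
\lambda_1(A_\alpha(G))\le \alpha\, q(G)+(1-2\alpha)\,\lambda_1(G).
\]

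Next we bound the two terms separately. Theorem \ref{thm:signless_vertex_local} gives $q(G)\le 2S$, and Theorem \ref{thm:spectral_vertex_localization_Turan} gives $\lambda_1(G)\le S$, where $S:=\sum_{v\in V}\left(1-\frac{1}{c(v)}\right)$. Substituting these bounds yields
\[
\lambda_1(A_\alpha(G))\le 2\alpha S+(1-2\alpha)S=S,
\]
which is the claimed inequality.

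The only real point is the decomposition $A_\alpha=\alpha Q+(1-2\alpha)A$. It needs $\alpha\le 1/2$, since otherwise the coefficient $1-2\alpha$ would be negative and the subadditivity step would fail. This is exactly why the corollary is restricted to $0\le\alpha\le 1/2$.
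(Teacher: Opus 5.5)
Your proof is correct and is essentially the paper's argument. The paper also writes $A_\alpha(G)=\alpha\,Q(G)+(1-2\alpha)A(G)$ and uses Weyl's inequality to get $\lambda_1(A_\alpha(G))\le \alpha\, q(G)+(1-2\alpha)\lambda_1(G)$, then finishes with Theorems \ref{thm:spectral_vertex_localization_Turan} and \ref{thm:signless_vertex_local}.
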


The proofs of Theorem \ref{thm:signless_vertex_local} and Corollary \ref{cor:A_alpha_local} are given in Section \ref{sec:localized_Q_index}. 

Finally, we take a significant step further and generalize Theorem \ref{thm:signless_vertex_local} to weighted signed graphs. We state and prove our results on vertex-weighted signed graphs in Section \ref{sec:weighted_signed_graph}. We conclude with some open problems in Section \ref{sec:open_problems}.

\section{A short proof of the Abreu-Nikiforov bound}
\label{sec:Nikiforov_short_proof}

Here, we give a short algebraic proof of Theorem \ref{thm:signless_vertex}. We first recall the Motzkin-Strauss Theorem. Let $G$ be a graph on $n$ vertices. Consider the standard simplex given by 
\[S = \left\{x \in \mathbb{R}^n : \sum_{v \in V(G)} x_v = 1, x_v \geq 0, v\in V(G)\right\}.\]
For a vector $x\in \mathbb{R}^n$, the \emph{support} of $x$ is given by $\supp(x) = \{v\in V(G) : x_v\neq 0\}$. 

\begin{theorem}[Motzkin-Strauss Theorem \cite{Motzkin_Straus_1965}]\label{thm:Motzkin_Strauss}
Let $G$  be a graph with adjacency matrix $A(G)$ and clique number $\omega$. For any $x \in S$, we have  
	\[x^TA(G)x \leq 1- \frac{1}{\omega}.\]
    Equality holds if and only if $\supp(x)$ induces a complete $\omega$-partite graph.
\end{theorem}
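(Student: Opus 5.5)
The plan is to prove the inequality by an extremal argument on the simplex $S$, and then extract the structure of the support from first- and second-order optimality conditions. Write $f(x)=x^TA(G)x=2\sum_{uv\in E(G)}x_ux_v$. Since $S$ is compact, $f$ attains a maximum on $S$. First I will show that some maximizer is supported on a clique. Among all maximizers, choose one, $x$, whose support is minimal. Suppose two vertices $u,v\in\supp(x)$ are non-adjacent, and put $d=e_u-e_v$. Then $d^TA(G)d=-2a_{uv}=0$, so $t\mapsto f(x+td)$ is affine in $t$. Moving along $d$ in the non-decreasing direction until $x_u$ or $x_v$ becomes $0$ keeps us in $S$. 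It does not decrease $f$, so we get another maximizer with strictly smaller support, a contradiction. Hence $\supp(x)$ is a clique $K$ of some size $k\le\omega$. On $K$ we have
\[
f(x)=\Big(\sum_{v\in K}x_v\Big)^2-\sum_{v\in K}x_v^2=1-\sum_{v\in K}x_v^2\le 1-\frac1k\le 1-\frac1\omega
\]
by Cauchy--Schwarz. This gives the inequality for every $x\in S$.

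For the equality statement, the direction I will prove is the meaningful one: if $f(x)=1-1/\omega$, then $\supp(x)$ induces a complete $\omega$-partite graph. The converse should be read with the weights taken into account. The bound is attained by putting weight $1/\omega$ on each part, but not for arbitrary weights on a clique. So let $x$ be any maximizer, and note that it need not have minimal support. The Lagrange/KKT conditions for maximizing $f$ on $S$ give $(A(G)x)_v=\lambda$ for all $v\in\supp(x)$. Summing against $x$ yields $\lambda=f(x)=1-1/\omega$.

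The key step, and the one I expect to need the most care, is to show that non-adjacency is transitive on $\supp(x)$. Suppose $u,v,w\in\supp(x)$ with $u\not\sim v$, $v\not\sim w$ but $u\sim w$. Take $d=e_u+e_w-2e_v$. Then $x+td\in S$ for small $t>0$, since $x_v>0$. The linear term is $2\,d^TA(G)x=2(\lambda+\lambda-2\lambda)=0$. The quadratic term is $d^TA(G)d=2a_{uw}-4a_{uv}-4a_{vw}=2>0$. Hence $f(x+td)=f(x)+2t^2>f(x)$, contradicting maximality. Therefore non-adjacency is an equivalence relation on $\supp(x)$. This means $\supp(x)$ induces a complete $p$-partite graph, and $p\le\omega$ because it contains a $K_p$.

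Finally, $p=\omega$ follows from the value. Applying the already proved inequality to the induced complete $p$-partite graph, whose clique number is $p$, gives $f(x)\le 1-1/p$. Equality with $1-1/\omega$ then forces $p\ge\omega$, so $p=\omega$.
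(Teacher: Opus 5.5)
Your proof is correct, and there is no proof in the paper to compare it with: the paper quotes this statement from Motzkin and Straus and uses it as a black box. Moreover, only the ``only if'' half of the equality clause is ever used, in the equality case of Theorem~\ref{thm:signless_vertex}, where $\supp(x)=V(G)$.

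Your inequality argument is the classical weight-shifting one: $f$ is affine along $e_u-e_v$ for non-adjacent $u,v$, so mass can be pushed off one of them. You phrase it through a maximizer of minimal support rather than the usual induction on $n$.

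For the equality case you give a self-contained route. First you use the first-order condition $(A(G)x)_v=\lambda$ on $\supp(x)$. You can justify this directly with the same perturbation, since $f(x+t(e_u-e_v))=f(x)+2t\bigl((A(G)x)_u-(A(G)x)_v\bigr)-2a_{uv}t^2$ and $x_v>0$ allows small $t>0$. Then the second-order perturbation $e_u+e_w-2e_v$ rules out an induced $K_1\cup K_2$ in the support. This shows that the support of \emph{every} maximizer induces a complete $\omega$-partite graph, which is more than the paper needs.

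You are also right that the converse as literally stated is false. For $G=K_2$ and $x=(0.9,0.1)$, the support induces $K_2$ but $x^TA(G)x=0.18<1/2$. The correct condition is that each part carries total weight $1/\omega$, exactly as in the paper's Theorem~\ref{thm:weighted_M_S_1}. To close the corrected equivalence, add one line: if the parts have weights $s_1,\dots,s_\omega$, then $x^TA(G)x=2\sum_{i<j}s_is_j=1-\sum_i s_i^2$, which equals $1-1/\omega$ if and only if all $s_i=1/\omega$.
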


\begin{proof}[Proof of Theorem \ref{thm:signless_vertex}]

Let $x$ be the unit Perron-eigenvector for the $Q$-index $q(G)$. Since $Q(G) = L(G) + 2A(G)$, we have
\[ q(G) = x^TL(G)x + 2x^TA(G)x.\]
Now, 
\[
    x^TL(G)x = \sum_{uv\in E(G)}(x_u - x_v)^2 \le \sum_{uv\in E(K_n)}(x_u - x_v)^2 = n - \left(\sum_{v\in V(G)}x_v\right)^2. 
\]

Again, Theorem \ref{thm:Motzkin_Strauss} implies 
\begin{equation}\label{eq:AN_1}
    x^TA(G)x \le \left(\sum_{v\in V(G)}x_v\right)^2 \left(1 - \frac{1}{\omega(G)}\right).
\end{equation}

On combining
\[ q(G) \le n + \left(\sum_{v\in V(G)}x_v\right)^2\left(1 - \frac{2}{\omega(G)}\right).\]
Using the Cauchy-Schwarz inequality, we have 
\begin{equation}\label{eq:AN_2}
    \left(\sum_{v\in V(G)}x_v\right)^2 \le n\left(\sum_{v\in V(G)}x_v^2\right) = n.
\end{equation}
We conclude that 
\[ q(G) \le n + n\left(1 - \frac{2}{\omega(G)}\right) = 2n\left(1 - \frac{1}{\omega(G)}\right).\]

It is clear that equality holds for regular $\omega$-partite graphs when $\omega \ge 3$ and complete bipartite graphs. 

Conversely, let $G$ be a graph for which equality holds. Then, by Theorem \ref{thm:Motzkin_Strauss}, the equality in \eqref{eq:AN_1} implies that
\(G\) is a complete \(\omega\)-partite graph. If \(\omega(G)\ge 3\), then \(1-2/\omega(G)>0\), so equality in \eqref{eq:AN_2} is also necessary, which implies \(x\) is constant on \(V(G)\). Since \(x\) is a \(q(G)\)-eigenvector, this implies that \(G\) is regular. Thus \(G\) is a regular complete \(\omega\)-partite graph in this case. The proof is complete.
\end{proof}

\section{Localized bound for $Q$-index}
\label{sec:localized_Q_index}

In this section, we prove Theorem \ref{thm:signless_vertex_local}. We require the following weighted Motzkin-Strauss type theorem.

\begin{theorem}[\cite{Bradac_2022, Liu_Ning_2026}]\label{thm:weighted_M_S_1}
    Let $W(G)=[w_{uv}]$ be a weighted matrix where $w_{uv} = \frac{c(uv)}{c(uv)-1}$. Then 
    \[\max_{x\in S}\ x^TW(G)x\le 1.\] Equality holds if and only if $\supp(x)$ induces a complete $\omega$-partite graph whose vertex classes $V_1, \ldots, V_{\omega}$ satisfy $\sum_{v\in V_i}x_v = \frac{1}{\omega}$ for each $1\le i\le \omega$. 
\end{theorem}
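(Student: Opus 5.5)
The plan is the classical Motzkin--Straus argument: shift mass between non-adjacent vertices of the support, combined with the fact that the weights $w_{uv}=\frac{c(uv)}{c(uv)-1}$ \emph{decrease} as $c(uv)$ grows. Write $f(x)=x^TW(G)x=\sum_{u\ne v,\,uv\in E}w_{uv}x_ux_v$, and note that $W(G)$ has zero diagonal. Since $S$ is compact, $f$ attains its maximum on $S$. Among all maximizers, choose one, $x$, with $|\supp(x)|$ minimal.

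\emph{Step 1 (the support is a clique).} Suppose $u,v\in\supp(x)$ are non-adjacent. Keep $x_u+x_v=s$ fixed and replace $x_u=t$, $x_v=s-t$. There is no $x_ux_v$ term and no diagonal term, so $f$ is affine in $t$. Moving $t$ to whichever endpoint of $[0,s]$ does not decrease $f$ gives another maximizer with strictly smaller support, a contradiction. Hence $K:=\supp(x)$ is a clique, say of order $k$. Every edge $uv$ inside $K$ lies in a clique of order at least $k$, so $c(uv)\ge k$ and therefore $w_{uv}\le \frac{k}{k-1}$. Using Cauchy--Schwarz, $\sum x_v^2\ge 1/k$ on $K$, and so
\[
f(x)\le \frac{k}{k-1}\Big(1-\sum_{v\in K}x_v^2\Big)\le \frac{k}{k-1}\Big(1-\frac1k\Big)=1 .
\]
The case $k=1$ gives $f=0$. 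This proves the inequality.

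\emph{Step 2 (structure of maximizers when $f(x)=1$).} Now let $x$ be an arbitrary point with $f(x)=1$; we no longer assume minimal support. First, Lagrange/KKT gives $(Wx)_v=f(x)=1$ for every $v\in\supp(x)$. Next I show that non-adjacency is transitive on $\supp(x)$. Suppose $u\not\sim v$ and $v\not\sim w$ but $u\sim w$, all in the support. Move $\varepsilon$ of mass from $v$ to $u$ and another $\varepsilon$ to $w$. The first-order change vanishes, because $(Wx)_u=(Wx)_v=(Wx)_w$. The second-order change is $2\varepsilon^2 w_{uw}>0$, which contradicts maximality. Therefore $\supp(x)$ induces a complete $k$-partite graph with classes $V_1,\dots,V_k$.

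For $u\in V_i$ and $v\in V_j$ with $i\ne j$, we have $c(uv)\ge k$ (choose one vertex from each class), so $w_{uv}\le\frac{k}{k-1}$. With $y_i=\sum_{v\in V_i}x_v$,
\[
1=f(x)\le \frac{k}{k-1}\sum_{i\ne j}y_iy_j\le 1 .
\]
Equality throughout forces $y_i=1/k$ for all $i$, and $c(uv)=k$ for every edge inside the support. Conversely, any such configuration gives $f=1$ by direct computation.

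\emph{Main obstacle.} The delicate point is matching $k$ with $\omega$ in the equality statement. Step 2 only forces $c(uv)=k$ on the support edges. When the statement is applied to the whole graph (as in the proof of Theorem~\ref{thm:signless_vertex_local}, where $x$ has full support), this yields $c(e)=k$ for all edges. It then follows that $k=\omega$, since any $\omega$-clique contains an edge $e$ with $c(e)=\omega$. For a partial support, I would read the equality clause as ``complete $k$-partite with $c(uv)=k$ on its edges''. I would also check that this reading is exactly what the later application needs. A second point to handle carefully is the KKT step: it needs $x$ to be a maximizer over $S$. This holds because $f(x)=1$ equals the maximum established in Step 1.
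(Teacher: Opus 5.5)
Your proposal is correct. The paper itself does not prove this statement; it imports it from Brada\v{c} and Liu--Ning, so there is no in-paper proof to compare against.

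Your Step 1 is the standard argument from those sources for the inequality. A maximizer of minimal support has clique support; then $c(uv)\ge k$ gives $w_{uv}\le \frac{k}{k-1}$, and Cauchy--Schwarz finishes. Your Step 2 is the classical route to the structure of Motzkin--Straus maximizers. KKT gives $(W(G)x)_v=1$ on the support. The perturbation $x+\varepsilon(e_u+e_w-2e_v)$ then gains exactly $2\varepsilon^2 w_{uw}>0$, so non-adjacency is transitive on the support. I checked the second-order computation, and $k\ge 2$ is automatic since $f(x)=1>0$.

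Your worry about the equality clause is justified, and your reading is the right one. The uniform vector on any $\omega$-clique attains the value $1$, so the maximum is always exactly $1$ when $G$ has an edge. The clause is therefore about which $x$ attain it. As literally stated with $\omega=\omega(G)$, its ``only if'' direction fails when $\supp(x)\neq V(G)$.

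For example, let $G$ be a triangle $abc$ with a pendant edge $cd$, and put $x_c=x_d=\frac12$. Then $c(cd)=2$, so $x^TW(G)x=2\cdot 2\cdot\frac14=1$. Yet $\supp(x)$ induces $K_2$, not a complete $3$-partite graph.

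The correct characterization is yours: $\supp(x)$ induces a complete $k$-partite graph with class sums $\frac1k$ and $c(uv)=k$ for every edge inside the support. The ``if'' direction of the paper's version is a special case of this.

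The paper uses the equality case only in the proof of Theorem~\ref{thm:signless_vertex_local}. There the vector $(y_v^2)=(x_v)$ has full support, so $G$ itself is complete $k$-partite, forcing $k=\omega$. The paper's conclusion therefore stands.
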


We are now ready to prove our result.

\begin{proof}[Proof of Theorem \ref{thm:signless_vertex_local}]
Let \(x>0\) be a Perron-eigenvector of \(Q(G)\) with $1$-norm equal to $1$, i.e., \[ \sum_{v\in V(G)} x_v=1. \]

Define a weighted adjacency matrix \(M\) by
\[ M_{uv}=M_{vu}:=\frac{x_u+x_v}{2\sqrt{x_u x_v}} \]
for \(uv\in E(G)\), and \(M_{uv}=0\) otherwise. Put
\[ y_v:=\sqrt{x_v}. \]

Since \(\sum_v x_v=1\), we have
\[ \sum_{v\in V(G)} y_v^2=1. \]

We claim that $\frac{q(G)}{2}$ is the largest eigenvalue of $M$ with eigenvector $y$. Observe that for every vertex \(v\in V(G)\),
\[ (My)_v = \sum_{u\sim v} \frac{x_u+x_v}{2\sqrt{x_u x_v}}\sqrt{x_u} = \frac{1}{2\sqrt{x_v}}\sum_{u\sim v}(x_u+x_v). \]

But
\[ \sum_{u\sim v}(x_u+x_v) = \sum_{u\sim v}x_u+\deg(v)x_v =(Q(G)x)_v = q(G) x_v,\]
which implies
\[ (My)_v=\frac{q(G)}{2} y_v. \]
Thus $\frac{q(G)}{2}$ is an eigenvalue of $M$ with eigenvector $y$. 

Since \(G\) is connected and \(M\) has positive weights on the edges of \(G\),
the matrix \(M\) is irreducible. Moreover, the eigenvector $y$ is non-negative. Hence, by the Perron-Frobenius Theorem,
\[ \lambda_1(M)=\frac{q(G)}{2}. \]

Now, because \(y\) is a unit Perron-eigenvector of \(M\),
\[ \lambda_1(M) = y^TMy = 2\sum_{uv\in E(G)} M_{uv}y_u y_v = 2\sum_{uv\in E(G)} \left( \sqrt{\frac{c(uv)-1}{c(uv)}}M_{uv}\right) \left(\sqrt{ \frac{c(uv)}{c(uv)-1}}y_uy_v\right). \]

Using the Cauchy-Schwarz inequality,
\[ \lambda_1(M)^2 \leq
\left(2\sum_{uv\in E(G)} \frac{c(uv)-1}{c(uv)}(M_{uv})^2\right)
\left(2\sum_{uv\in E(G)} \frac{c(uv)}{c(uv)-1}y_u^2y_v^2\right).
\]

Using Theorem \ref{thm:weighted_M_S_1},  
\begin{equation}\label{eq:signless_vertex_local_1}
    \left(2\sum_{uv\in E(G)} \frac{c(uv)}{c(uv)-1}y_u^2y_v^2\right) \le 1,
\end{equation}
which implies
\[ \lambda_1(M)^2 \leq 2\sum_{uv\in E(G)} \frac{c(uv)-1}{c(uv)}(M_{uv})^2, \]
equivalently
\[ \frac{q(G)^2}{4} \leq 2 \sum_{uv\in E(G)} \frac{c(uv)-1}{c(uv)}\left(\frac{x_u+x_v}{2\sqrt{x_u x_v}}\right)^2.\]

Therefore, 
\begin{align*}
q(G)^2 & \leq 2 \sum_{uv\in E(G)} \frac{c(uv)-1}{c(uv)}\left(\frac{x_u+x_v}{\sqrt{x_u x_v}}\right)^2\\
& = 2\sum_{uv\in E(G)} \frac{c(uv)-1}{c(uv)}
\left(\frac{x_u}{x_v} + \frac{x_v}{x_u} + 2\right) \\
& = 2\sum_{v\in V(G)} \sum_{u\sim v} \frac{c(uv)-1}{c(uv)}\left(1+\frac{x_u}{x_v}\right)\\
& \le  2 \sum_{v\in V(G)} \sum_{u\sim v} \frac{c(v)-1}{c(v)} \left(1+\frac{x_u}{x_v}\right),
\end{align*}
where the last inequality holds since $c(uv)\le \min\{c(u), c(v)\}$. 

Note that for each vertex \(v\),
\[ \sum_{u\sim v} \left(1+\frac{x_u}{x_v}\right) = \frac1{x_v}\sum_{u\sim v}(x_v+x_u) = \frac{(Q(G)x)_v}{x_v} = q(G).\]

Therefore
\[ q(G)^2 \leq 2 \sum_{v\in V(G)} \frac{c(v)-1}{c(v)} q(G), \]
which implies
\[ q(G) \leq 2\sum_{v\in V(G)} \frac{c(v)-1}{c(v)}.\]

Suppose now that $G$ is a graph for which the inequality is tight. Then in equation \eqref{eq:signless_vertex_local_1} equality holds. By Theorem \ref{thm:weighted_M_S_1},  $\supp(y)$ must induce a complete $\omega$-partite graph. Since $\supp(y) = \supp(x) = V(G)$, we conclude that $G$ is a complete $\omega$-partite graph, say with parts $V_1,\dots,V_\omega$, and
\[
        \sum_{v\in V_i}x_v=\frac1{\omega}
        \qquad (1\le i\le \omega).
\]
It is clear that the eigenvector $x$ is constant on each $V_i$, i.e., for $v\in V_i$, $x_v = \frac{1}{\omega|V_i|}$. Writing the eigen-equation for $q(G)$ gives
\[    q(G)x_v 
    = (n-|V_i|)x_v+\sum_{j\ne i} \sum_{u\in V_j}x_u  \\
    = (n-|V_i|)x_v+\frac{\omega-1}{\omega}. 
\]
Dividing by $x_v$ and using the fact that $x_v = \frac{1}{\omega |V_i|}$ gives
\[
      q(G) 
    = (n-|V_i|)+\left(\frac{\omega-1}{\omega}\right)\omega|V_i|\\
    = n + (\omega - 2)|V_i|.  
\]

If \(\omega\ge 3\), the above identity forces that all $V_i$'s have the same size, i.e., \(G\) is a regular complete \(\omega\)-partite graph. 

Conversely, it is clear that equality holds for regular $\omega$-partite graphs when $\omega \ge 3$ and complete bipartite graphs. The proof is complete.
\end{proof}

We conclude this section with a proof of Corollary \ref{cor:A_alpha_local}. 

\begin{proof}[Proof of Corollary \ref{cor:A_alpha_local}]
 For $\alpha \in [0, \frac{1}{2}]$, note that 
 \[ A_\alpha(G) = \alpha \left(D(G) + A(G)\right) + (1-2\alpha)A(G).\]
Using the well-known Weyl's inequality, we have 
\[ \lambda_1(A_\alpha(G))\le \alpha q(G) + (1-2\alpha)\lambda_1(G).\]
Applying Theorems \ref{thm:spectral_vertex_localization_Turan} and \ref{thm:signless_vertex_local}, we get the desired inequality.
\end{proof}

\section{Vertex-weighted signed graphs}\label{sec:weighted_signed_graph}

A \emph{signed graph} $\Gamma$ is a pair $(G, \sigma)$, where $G =(V(G), E(G))$ is an undirected graph, called the \emph{underlying unsigned graph}, and $\sigma:E(G)\rightarrow\{-1, +1\}$ is the \emph{sign function}. The \emph{adjacency matrix} of $\Gamma$, denoted by $A(\Gamma)$, is defined as
\[A(\Gamma)_{uv}=
\begin{cases}
    \sigma(uv)&\text{if} \mbox{ $u\sim v$},\\
    0&\text{otherwise.}
\end{cases}\]
    

The \textit{sign of a cycle} (with some orientation) $C = v_1 v_2 \ldots v_k v_1$, denoted by $\sigma(C)$, is defined as the product of the signs of its edges, that is 
$$\sigma(C) := \sigma(e_{12}) \sigma(e_{23}) \cdots \sigma(e_{(k-1)l}) \sigma(e_{k1}),$$ where $e_{ij}$ is the edge between the vertices $v_i$ and $v_j$. A cycle $C$ is  \textit{neutral} if $\sigma(C) = 1$, and a signed graph is  \textit{balanced} if all its cycles are neutral. The \emph{balanced clique number} of a signed graph $\Gamma$, denoted by $\omega_b(\Gamma)$, is the maximum number of vertices in a balanced complete subgraph of $\Gamma$. 

A function from the vertex set of $G$ to the set $\{1,-1\}$ is called a \textit{switching function}. Two signed graphs $\Gamma_1 = (G, \sigma_1)$ and $\Gamma_2 = (G, \sigma_2)$ are \textit{switching equivalent}, written as $\Sigma_1 \sim \Sigma_2$, if there is a switching function $\eta: V(G) \to \{1 ,-1\}$ such that $$\sigma_2(e_{ij})=\eta(v_i)^{-1}\sigma_1(e_{ij})\eta(v_j).$$
The switching equivalence of two signed graphs can be defined in the following equivalent way: two signed graphs $\Gamma_1 = (G, \sigma_1)$ and $\Gamma_2 = (G, \sigma_2)$ are \textit{switching equivalent}, if there exists
a diagonal matrix $D_{\eta}$ with diagonal entries from $\{1, -1\}$ such that
\begin{align}\label{eq: switching equi}
    A(\Gamma_2) = D_{\eta}^{-1}A(\Gamma_1)D_{\eta}.
\end{align}
Switching equivalence preserves connectivity and balance. For further details on signed graphs, we refer to \cite{Harary_1955, Zaslavsky_1982, Zaslavsky_1998}.

\subsection{Known results}


Wei Wang, Zhidan Yan, and Jianguo Qian \cite{Wang_Yan_Jianguo_2021} established a Motzkin-Straus type theorem for signed graphs and, as a consequence, extended Wilf's inequality to signed graphs. Precisely, they proved the following: Let $\Gamma = (G, \sigma)$ be a signed graph with $n$ vertices, and balanced clique number $\omega_{b}(\Gamma)$. Then 
\begin{equation}
    \lambda_1(A(\Gamma)) \leq n\bigg(1-\frac{1}{\omega_b(\Gamma)}\bigg).
\end{equation}

Gaoxing Sun, Feng Liu, and Kaiyang Lan \cite{Sun_Liu_Lan_2022} obtained the following edge version, which generalizes the above and also the spectral Tur\'{a}n's Theorem:
\begin{equation}
    \lambda_1^2(A(\Gamma)) \leq 2m \left(1 - \frac{1}{\omega_b(\Gamma)}\right).
\end{equation}

Subsequently, Kannan and Pragada \cite{Kannan_Pragada_2023} proved that
\begin{equation}
\lambda_1^2(A(\Gamma)) \leq 2(m - \epsilon(\Gamma))\left(1 - \frac{1}{\omega_b(\Gamma)}\right),
\end{equation}
where $\epsilon(\Gamma)$ denotes the frustration index of a signed graph $\Gamma$, which is the minimum number of edges to remove for balance such that the signed graph is balanced. 

For a vertex $v\in V(\Gamma)$ and edge $uv \in E(\Gamma)$, we denote by $c_b(v)$ and $c_b(uv)$ the order of the largest balanced complete subgraph of $\Gamma$ containing the vertex $v$ and edge $uv$, respectively.



Linfeng Xie and Xiaogang Liu \cite{Xie_Liu_2025} established a localized Motzkin-Struass theorem for signed graphs, and as a consequence, they established the following edge-localized spectral Tur\'{a}n's theorem for signed graphs:
\begin{equation}
    \lambda_1^2(A(\Gamma)) \leq 2 \sum_{uv \in E^+(\Gamma')} 1-\frac{1}{c_b(uv)} \le 2 \sum_{uv \in E(\Gamma)} 1-\frac{1}{c_b(uv)},
\end{equation}
where $\Gamma' \sim \Gamma$ is such that $\lambda_1(\Gamma')$ has a non-negative eigenvector and $E^+(\Gamma')$ is the set of positive edges in $\Gamma'$. This result generalizes all the above bounds. 

\subsection{Our contribution}

For the signed Laplacian, even the non-localized versions are not established for signed graphs. Here, we establish a general localized result for weighted signed graphs. 

For a signed graph $\Gamma = (G, \sigma)$, denote its adjacency matrix by $A(\Gamma)$. For a weight function $w:V(G)\rightarrow \mathbb{R}^{+}$, consider the \emph{vertex weighted adjacency matrix} $W(G)$ of $G$ defined as:
\[ W(G)_{uv}:= \begin{cases}
    \sqrt{w(u)w(v)} & \text{if } uv\in E(G),\\
    0 & \text{otherwise}.
\end{cases}
\]
We denote by $D(G)$ the \emph{vertex weighted degree matrix}, which is a diagonal matrix with entry $\sum_{u \sim v} w(u)$ corresponding to vertex $v$.
Define the corresponding vertex-weighted adjacency matrix of the signed graph $\Gamma$ to be
\[W(\Gamma) := W(G)\circ A(\Gamma).\] 
 For a given $\Gamma$ and $W(G)$, we define the corresponding \emph{vertex weighted Laplacian} and \emph{vertex weighted signless Laplacian} of $\Gamma$ to be
\[L(\Gamma) := D(G) - W(\Gamma)\quad \text{and}\quad Q(\Gamma) := D(G) + W(\Gamma) = L(-\Gamma),\]
respectively. With the above notation, we prove a much stronger localization inequality for $\lambda_1(Q(\Gamma))$ of weighted signed graphs. 

\begin{theorem}\label{thm:weighted_signed_local} Let $\Gamma = (G, \sigma)$ be a connected signed graph and consider a weight function $w:V(G)\rightarrow \mathbb{R}^+$. Then
\[\lambda_1(Q(\Gamma)) \leq 2\sum_{v\in V(G)} \left(1 -\frac{1}{c_b(v)}\right) w(v).\]
\end{theorem}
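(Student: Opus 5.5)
Our plan is to reduce to a nonnegative eigenvector and then run the argument from the proof of Theorem \ref{thm:signless_vertex_local}, using the signed localized Motzkin--Straus theorem of Xie and Liu \cite{Xie_Liu_2025} in place of Theorem \ref{thm:weighted_M_S_1}.

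\emph{Step 1: reduce to a nonnegative eigenvector.} Let $z$ be a unit eigenvector of $Q(\Gamma)$ for $q:=\lambda_1(Q(\Gamma))$. Put $x_v:=|z_v|$. Since $D(G)$ is diagonal and has positive entries on the edges of $G$, we get
\[
q = z^TQ(\Gamma)z \le \sum_v D_{vv}x_v^2+2\sum_{uv\in E}\sqrt{w(u)w(v)}\,x_ux_v = x^TQ^+x,
\]
where $Q^+:=D(G)+W(G)$. So $q\le\lambda_1(Q^+)$. This alone discards the signs, and the bound would then involve $c(v)$ rather than $c_b(v)$, so we need a finer step.

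Switch $\Gamma$ to $\Gamma'$ by $\eta_v=\mathrm{sign}(z_v)$, taking $\eta_v=1$ if $z_v=0$. This does not change $q$, $c_b$, or $D$. Then $x$ is an eigenvector of $Q(\Gamma')$ and $x\ge0$. Negative edges of $\Gamma'$ contribute nonpositively to $x^TQ(\Gamma')x$, so
\[
q\le x^TDx+2\sum_{uv\in E^+(\Gamma')}\sqrt{w(u)w(v)}\,x_ux_v .
\]
The vector $x$ may have zeros, unlike the Perron case. We therefore work on $\supp(x)$ and carry the nonnegativity throughout.

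\emph{Step 2: symmetrize.} Rather than forming the matrix $M$, we adapt the computation directly. Write $q x_v=(Q(\Gamma')x)_v$, multiply by $x_v$, and sum over $v$. Splitting the contributions over edges gives
\[
q=\sum_{uv\in E}\bigl(w(u)x_v^2+w(v)x_u^2\bigr)+2\sum_{uv\in E}\sigma'(uv)\sqrt{w(u)w(v)}\,x_ux_v .
\]
Mimicking the proof of Theorem \ref{thm:signless_vertex_local}, set $y_v=\sqrt{x_v}$ after renormalizing $x$ to $1$-norm. Apply Cauchy--Schwarz with the factors $\sqrt{(c_b(uv)-1)/c_b(uv)}$, taking the sum only over positive edges of $\Gamma'$ (negative edges only decrease the quantity). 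The factor $\sum_{uv\in E^+}\frac{c_b(uv)}{c_b(uv)-1}y_u^2y_v^2\le\frac12$ then comes from the localized signed Motzkin--Straus theorem of \cite{Xie_Liu_2025}.

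The other factor becomes
\[
\sum_{uv}\frac{c_b(uv)-1}{c_b(uv)}\cdot\frac{(\ldots)^2}{x_ux_v}.
\]
We bound $c_b(uv)\le\min(c_b(u),c_b(v))$ and regroup the terms by vertex. The aim is for the quantity at each vertex $v$ to reproduce $w(v)$ times an eigen-ratio that equals $q$, so that $q^2\le 2q\sum_v(1-1/c_b(v))w(v)$.

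\emph{Main obstacle.} The difficulty is that in the weighted, signed setting the eigen-equation has the form
\[
qx_v=\Bigl(\sum_{u\sim v}w(u)\Bigr)x_v+\sum_{u\sim v}\sigma'\sqrt{w(u)w(v)}\,x_u,
\]
which is not symmetric between $u$ and $v$. The natural edge weight analogous to $M_{uv}$ is then $(w(u)x_v+w(v)x_u)/(2\sqrt{x_ux_v})$, up to the $\sqrt{w}$ scaling. The key design choice is the substitution that makes the vertex regrouping yield exactly $w(v)\cdot q$.

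Our first attempt is the change of variables $x_v=\sqrt{w(v)}\,t_v$. In these variables the matrix is similar to $\tilde Q=\mathrm{diag}(\sum_{u\sim v}w(u))+\mathrm{diag}(w)\,A(\Gamma')$. We then define $M_{uv}=(w(u)t_v+w(v)t_u)/(2\sqrt{t_ut_v})$. A negative edge breaks the positivity of the sum, and for it we use the crude bound $-2ab\le a^2+b^2$. If the per-vertex regrouping fails to give exactly $w(v)q$, we fall back on the inequality $\sum_{u\sim v}\frac{c_b(uv)-1}{c_b(uv)}(\cdots)\le(1-1/c_b(v))\,w(v)\,q$. We would verify this from the eigen-equation, using that the terms involve only nonnegative entries after the switching.
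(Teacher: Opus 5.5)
There is a genuine gap, and it sits exactly at the step you call the main obstacle: the proposal never handles the negative edges, and neither of the two treatments you suggest works.

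\emph{Discarding the negative cross terms.} Dropping them while keeping their contribution to $D(G)$ is already too lossy. Take the graph $\Gamma_4$ of Section~\ref{sec:open_problems}, i.e.\ $K_4$ with one negative edge $12$ and $w\equiv 1$. Every vertex has $c_b(v)=3$, so the target is $16/3$. The top eigenvalue $q=3+\sqrt5$ is simple. Its unit eigenvector is $(a,a,b,b)\ge 0$ with $a^2=1/(5+\sqrt5)$, so no switching is needed. Your intermediate quantity is $x^TDx+2\sum_{uv\in E^+}x_ux_v=q+2a^2\approx 5.51>16/3$. Hence no further estimate of that expression can yield the theorem.

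\emph{Using the $1$-norm identity instead.} If you switch to the identity from the proof of Theorem~\ref{thm:signless_vertex_local}, negative edges do cancel exactly: for $w\equiv1$ and $\sum_v x_v=1$ one gets $q=2\sum_{uv\in E^+}(x_u+x_v)$. But the per-vertex regrouping then no longer reproduces $q$. The eigen-equation gives $\sum_{u\sim^+ v}(1+x_u/x_v)=q+\sum_{u\sim^- v}(x_u/x_v-1)$, which exceeds $q$ whenever $v$ has a negative neighbour with a larger entry. Over a negative edge $uv$ whose endpoints carry the same factor $1-1/c$, the two surplus terms add up to $(1-1/c)(x_u-x_v)^2/(x_ux_v)\ge 0$. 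So your fallback inequality $\sum_{u\sim v}\frac{c_b(uv)-1}{c_b(uv)}(\cdots)\le(1-1/c_b(v))w(v)q$ is false in general, and you give no argument for it.

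\emph{Zero entries.} After switching, $x$ may have zero entries, so $M_{uv}$ is undefined. Restricting to $\supp(x)$ does not repair this, because edges leaving the support still contribute to the diagonal.

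\emph{The missing idea.} Keep the negative edges as Laplacian-type terms instead of discarding them. In other words, imitate the short proof of Section~\ref{sec:Nikiforov_short_proof} rather than the Perron-matrix argument. The paper proceeds as follows.
\begin{enumerate}
\item Write $x^TQ(\Gamma)x=\sum_{uv\in E(G)}(\sqrt{w(u)}x_v+\sigma_{uv}\sqrt{w(v)}x_u)^2$.
\item Rewrite each positive-edge square as the corresponding difference square plus $4\sqrt{w(u)w(v)}x_ux_v$.
\item Bound the total of all difference squares, negative edges included, by the same sum over $E(K_n)$. By Lagrange's identity this equals $\sum_v w(v)-(\sum_v y_v)^2$, where $y_v=\sqrt{w(v)}x_v$.
\item The remaining term $4\sum_{uv\in E(H)}y_uy_v-(\sum_v y_v)^2$ lives on the positive subgraph $H$. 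Theorem~\ref{thm:weighted_M_S_1} bounds it by $2\sum_{uv\in E(H)}\frac{c_H(uv)-2}{c_H(uv)-1}y_uy_v$.
\item A Cauchy--Schwarz step and two further applications of the same theorem bound this by $\sum_v\delta_v w(v)$, where $\delta_v=1-2/c_H(v)$, or $0$ if $v$ is isolated in $H$.
\item Finally, $c_H(v)\le c_b(v)$.
\end{enumerate}
This argument needs only $x\ge 0$ and never divides by entries of $x$.
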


If one takes $\sigma$ and $w$ to be the all $1$ functions, then the above Theorem \ref{thm:weighted_signed_local} implies Theorem \ref{thm:signless_vertex_local}.   

\begin{proof}
We assume that $n\ge 2$ so that $c_b(v)\ge 2$ for every $v\in V(\Gamma)$. Let $x$ denote a unit eigenvector corresponding to $\lambda_1(Q(\Gamma))$. Without loss of generality, we can assume that $x\ge 0$. This is because the graph $\Gamma'$ obtained from \(\Gamma\) by switching w.r.t. the signs of the entries of \(x\) has $\lambda_1(Q(\Gamma')) = \lambda_1(Q(\Gamma))$ with corresponding eigenvector $|x|$. One can then work with $\Gamma'$ instead of $\Gamma$.

We know that $\lambda_1(Q(\Gamma)) = x^T Q(\Gamma)x$. Observe that
\begin{align*}
x^TQ(\Gamma)x
&=x^T(D(G)+W(\Gamma))x \\
&=\sum_{v\in V(G)}
\left(\sum_{u\sim v}w(u)\right)x_v^2
+2\sum_{uv\in E(G)}
\sigma_{uv}\sqrt{w(u)w(v)}x_ux_v \\
&=\sum_{uv\in E(G)}
\left(w(u)x_v^2+w(v)x_u^2
+2\sigma_{uv}\sqrt{w(u)w(v)}x_ux_v\right) \\
&=\sum_{uv\in E(G)}
\left(\sqrt{w(u)}x_v+\sigma_{uv}\sqrt{w(v)}x_u\right)^2 .
\end{align*}

Let \(H\) be the spanning subgraph of \(G\) consisting of the positive edges
of \(\Gamma\). Let $y$ be the vector such that \[y_v:= \sqrt{w(v)} x_v \quad (v\in V(\Gamma)).\] 
Then
\begin{align}\label{eq:ws_1}
x^TQ(\Gamma)x 
&= \sum_{uv\in E(G)}
\left(\sqrt{w(v)}x_u + \sigma_{uv}\sqrt{w(u)}x_v\right)^2 \nonumber \\
&= 
\sum_{uv\in E(H)}
\left(\sqrt{w(v)}x_u+\sqrt{w(u)}x_v\right)^2 +
\sum_{uv\in E(G)\setminus E(H)}
\left(\sqrt{w(v)}x_u-\sqrt{w(u)}x_v\right)^2 \nonumber \\
&\le
\sum_{uv\in E(K_n)}
\left(\sqrt{w(v)}x_u-\sqrt{w(u)}x_v\right)^2
+
4\sum_{uv\in E(H)}\sqrt{w(u)w(v)}x_ux_v \nonumber \\
&=
\left(\sum_{v\in V(G)} w(v)\right)\left(\sum_{v\in V(G)} x_v^2\right)-\left(\sum_{v \in V(G)}\sqrt{w(v)}x_v \right)^2+4\sum_{uv\in E(H)}\sqrt{w(u)w(v)}x_ux_v \nonumber \\
&= 
\left(\sum_{v\in V(G)} w(v)\right)+4\sum_{uv\in E(H)}y_uy_v -\left(\sum_{v \in V(G)}y_v \right)^2.
\end{align}

By Theorem \ref{thm:weighted_M_S_1} applied to the all-positive
graph \(H\) with vector \(y_v/\sum_{v \in V(G)} y_v\), we get
\[2\sum_{uv\in E(H)} \frac{c_H(uv)}{c_H(uv)-1}y_uy_v \le \left(\sum_{v \in V(G)}y_v \right)^2,
\]
where $c_H(uv)$ denotes the size of the largest clique in $H$ containing the edge $uv$. Hence
\begin{equation}\label{eq:ws_2}
4\sum_{uv\in E(H)}y_uy_v-\left(\sum_{v \in V(G)}y_v \right)^2
\le
2\sum_{uv\in E(H)}
\frac{c_H(uv)-2}{c_H(uv)-1}y_uy_v.
\end{equation}

For any vertex $v\in V(H) = V(\Gamma)$, define
\[
\delta_v=
\begin{cases}
1-\dfrac{2}{c_H(v)} & \text{ if }v \text{ is incident to an edge of } H,\\[6pt]
0 & \text{ if } v \text{ is isolated in } H,
\end{cases}
\]
where $c_H(v)$ denotes the size of the largest clique in $H$ containing the vertex $v$. We claim that,
\begin{equation}\label{eq:ws_3}
2\sum_{uv\in E(H)}
\frac{c_H(uv)-2}{c_H(uv)-1}
\sqrt{w(u)w(v)}\,x_ux_v
\le
\left(\sum_{v \in V(G)} \delta_v w(v)\right).
\end{equation}

Using the Cauchy-Schwarz inequality, we have 
\begin{align*}
& \quad \ \left(
2\sum_{uv\in E(H)} \frac{c_H(uv)-2}{c_H(uv)-1} \sqrt{w(u)w(v)} \ x_ux_v \right)^2 \\
&=  \left(2\sum_{uv\in E(H)} \sqrt{\frac{c_H(uv)}{c_H(uv)-1}}x_ux_v \cdot\frac{c_H(uv)-2}{\sqrt{c_H(uv)(c_H(uv)-1)}} \sqrt{w(u)w(v)}\right)^2\\
&\le
\left( 2\sum_{uv\in E(H)} \frac{c_H(uv)}{c_H(uv)-1}x_u^2x_v^2 \right) \left( 2\sum_{uv\in E(H)} \frac{(c_H(uv)-2)^2}{c_H(uv)(c_H(uv)-1)}w(u)w(v) \right) \\
&\le
2\sum_{uv\in E(H)} \frac{(c_H(uv)-2)^2}{c_H(uv)(c_H(uv)-1)}w(u)w(v),
\end{align*}
where the last inequality follows by Theorem \ref{thm:weighted_M_S_1}. Since \(c_H(uv)\le \min\{c_H(u),c_H(v)\}\), we have
\[ \min\{\delta_u,\delta_v\}\ge \frac{c_H(uv)-2}{c_H(uv)},\]
which implies
\begin{align*}
& \quad \ 2\sum_{uv\in E(H)}
\frac{(c_H(uv)-2)^2}{c_H(uv)(c_H(uv)-1)}w(u)w(v) \\
& \le 
2\sum_{uv\in E(H)} \frac{c_H(uv)}{c_H(uv)-1}\delta_u\delta_v w(u)w(v) \\
& \le
\left(\sum_{v \in V(G)} \delta_v w(v)\right)^2,
\end{align*}
where the last inequality follows by Theorem \ref{thm:weighted_M_S_1}. This proves the inequality \eqref{eq:ws_3}. Combining  the inequalities \eqref{eq:ws_1}, \eqref{eq:ws_2} and \eqref{eq:ws_3}, we get
\[ \lambda_1(Q(\Gamma)) = x^TQ(\Gamma)x \le \left(\sum_{v\in V(G)} w(v)\right)+\left(\sum_{v \in V(G)} \delta_v w(v)\right). \]

Now, every clique of \(H\) is an all-positive clique of $\Gamma$, implying \(c_H(v)\le c_b(v)\) and consequently
\[ \delta_v\le 1-\frac{2}{c_b(v)}. \]
Hence
\[\lambda_1(Q(\Gamma))\le \sum_{v\in V(G)} w(v)+\sum_{v\in V(G)}\left(1-\frac{2}{c_b(v)}\right)w(v) = 2\sum_{v\in V(G)}\left(1-\frac{1}{c_b(v)}\right)w(v). \]
This completes the proof. 
\end{proof}

\section{Open problems}
\label{sec:open_problems}

In this paper, we focused on vertex localization
of Tur\'{a}n type inequalities for $q(G)$. Yongtao Li, Weijun Liu, and Lihua Feng in the survey paper \cite{Li_Liu_Feng_2022} asked whether the following edge spectral version of Turan's theorem holds for $q(G)$: 
\[q(G)^2 \le 8m \left(1 - \frac{1}{\omega(G)}\right).\]
This bound doesn't hold as they demonstrate in their paper. We propose the following edge localized inequality for $q(G)$.

\begin{conjecture}\label{conj:signless_edge_local} For any connected graph $G$, we have 
\begin{equation}\label{eq:signless_edge_local}
    q(G)\le 2\sum_{uv\in E(G)}\left(1-\frac{1}{c(uv)}\right)\left(\frac{1}{\deg(u)} + \frac{1}{\deg(v)}\right).
\end{equation}
\end{conjecture}

\begin{remark}
Note that 
\begin{align*}
    & \quad \ 2\sum_{uv\in E(G)}\left(1-\frac{1}{c(uv)}\right)\left(\frac{1}{\deg(u)} + \frac{1}{\deg(v)}\right) \\
    & \le 2\sum_{uv\in E(G)}\left(1-\frac{1}{c(u)}\right)\frac{1}{\deg(u)} + \left(1-\frac{1}{c(v)}\right)\frac{1}{\deg(v)}\\
    & = 2 \sum_{v\in V(G)} \left(1-\frac{1}{c(v)}\right). 
\end{align*}
Hence, Conjecture \ref{conj:signless_edge_local}, if true, implies Theorem \ref{thm:signless_vertex_local}.  
\end{remark} 

The corresponding signed analogue of Conjecture \ref{conj:signless_edge_local}, with \(c(uv)\) replaced by \(c_b(uv)\), is false as we show below.

For $n\ge 4$, let \(\Gamma_n\)  be the signed complete graph \(K_n\) with $V(K_n) = \{1, 2, \ldots, n\}$, and exactly one negative edge, say \(12\), and all other edges positive. It is clear that 
\[ c_b(e) = 
\begin{cases}
    n-1 & \text{ if }e\neq 12,\\
    2 & \text{ if }e = 12.
\end{cases}\]

Therefore the RHS in Conjecture \ref{conj:signless_edge_local} evaluates to 
\begin{equation}\label{eq:signed_counterexample_1}
  2(n-1)-\frac{4(n-2)}{(n-1)^2}.  
\end{equation}

On the other hand,
\[ Q(\Gamma_n)=(n-1)I+A(\Gamma_n).\]
The quotient matrix for $A(\Gamma_n)$ is given by 
\[
\begin{pmatrix}
        -1 & n-2 \\
        2 & n-3
\end{pmatrix},
\]
which has eigenvalues 
\[
        \frac{n-4\pm\sqrt{n^2+4n-12}}{2}.
\]
Thus, for $n\ge 4$, we have
\begin{align}\label{eq:signed_counterexample_2}
\lambda_1(Q(\Gamma_n)) & = (n-1) + \lambda_1(A(\Gamma_n))\nonumber\\
& = (n-1) + \frac{n-4+\sqrt{n^2+4n-12}}{2} \nonumber\\
& = \frac{3n-6+\sqrt{n^2+4n-12}}{2}.
\end{align}

Using an elementary calculation, it can be checked that the expression in \eqref{eq:signed_counterexample_1} is strictly less than $\lambda_1(Q(\Gamma_n))$ given in \eqref{eq:signed_counterexample_2}. Hence, the signed analogue of Conjecture \ref{conj:signless_edge_local} fails for $\Gamma_n \ (n\ge 4)$. 

\section*{Acknowledgements}

We acknowledge the use of ChatGPT 5.5 Pro during the ideation phase. We declare that the text is not AI-generated. M. Rajesh Kannan acknowledges financial support from the ANRF-CRG India. 

\bibliographystyle{plain}
\bibliography{signless_references}

\vspace{0.4cm}

\affl{M. Rajesh Kannan}{rajeshkannan@math.iith.ac.in, rajeshkannan1.m@gmail.com}{Department of Mathematics, Indian Institute of Technology Hyderabad, Sangareddy 502285, India}

\affl{Hitesh Kumar}{hitesh.kumar.math@gmail.com, hitesh\_kumar@sfu.ca}{Department of Mathematics, Simon Fraser University, Burnaby, Canada}

\affl{Shivaramakrishna Pragada}{shivaramakrishna\_pragada@sfu.ca, shivaramkratos@gmail.com}{Department of Mathematics, Simon Fraser University, Burnaby, Canada}

\end{document}